\documentclass[12pt,reqno]{article}
\usepackage{amssymb}
\usepackage{amsmath}
\usepackage{amsfonts}
\usepackage{amsthm}
\usepackage{centernot}
\usepackage{enumitem}
\newtheorem{corollary}{Corollary}

\newtheorem{theorem}{Theorem}

\newtheorem{remark}{Remark}
\begin{document}
\title{A Pépin-Type Characterization for Fermat Pseudoprimes}
\author{Paolo Starni}
\date{}

\maketitle
\begin{abstract}
\noindent
Pépin's primality test asserts that, for $n\ge2$,
\[
3^{(F_n-1)/2}\equiv -1 \pmod{F_n}
\]
if and only if $F_n$ is prime. We establish a natural analogue of Pépin's criterion for pseudoprimality. More precisely, we prove that, for \mbox{$n\ge5$,}
\[
3^{(F_n-1)/2}\equiv 1 \pmod{F_n}
\]
if and only if $F_n$ is pseudoprime to the base $3$. 
\end{abstract}

\medskip

\section{Introduction}

Let $F_n = 2^{2^n} + 1$, $n \ge 0$, denote the $n$th Fermat number.

A classical result due to Pépin states that, for $n \ge 2$,
\[
3^{(F_n-1)/2} \equiv -1 \pmod{F_n}
\quad \Longleftrightarrow \quad
F_n \text{ is prime}.
\]

\noindent
The base $3$ is one of the admissible bases; other examples include $5, 10, \dots$ (p.~84 in \cite{Ri}). We restrict to base $3$, although all the results obtained below remain valid for other admissible bases.

The aim of this note is to establish a pseudoprimality criterion for Fermat numbers that naturally complements Pépin's primality test.

More precisely, we prove that a Fermat number is pseudoprime to the base $3$ if and only if Pépin's congruence holds with $1$ in place of $-1$. Thus Pépin's criterion admits a complete analogue for base-$3$ pseudoprimality. As a further consequence, we show that the congruence
\[
3^{(F_n-1)/4}\not\equiv -1\pmod{F_n}
\]
has no solutions for every $n\ge 5$.

\section{The main result}
Since there are no composite Fermat numbers for $n<5$, we restrict attention to $n\ge 5$.
\begin{theorem}
Let $\gcd(a,F_n)=1$. If $F_n$ is pseudoprime to the base $a$, then
\[
a^{(F_n-1)/4} \equiv 1 \pmod{F_n}.
\]
\end{theorem}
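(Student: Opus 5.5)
The plan is to work with multiplicative orders and show that the order of $a$ modulo $F_n$ is a power of $2$ that is too small to reach $2^{2^n-1}$ or $2^{2^n}$. Throughout, $F_n$ is composite: a pseudoprime is composite by definition, and $n\ge5$. By hypothesis $a^{F_n-1}\equiv1\pmod{F_n}$ and $F_n-1=2^{2^n}$. Hence the order $\operatorname{ord}_{F_n}(a)$ divides $2^{2^n}$, so it is a power of $2$. It therefore suffices to show $\operatorname{ord}_{F_n}(a)\mid 2^{2^n-2}=(F_n-1)/4$.

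Write $F_n=\prod p_i^{e_i}$. Then $\operatorname{ord}_{F_n}(a)=\operatorname{lcm}_i \operatorname{ord}_{p_i^{e_i}}(a)$, and each $\operatorname{ord}_{p_i^{e_i}}(a)$ is also a power of $2$. This order divides $\varphi(p_i^{e_i})=p_i^{e_i-1}(p_i-1)$. Since $p_i$ is odd, the $2$-part of $\varphi(p_i^{e_i})$ is the $2$-part of $p_i-1$. Hence $\operatorname{ord}_{p_i^{e_i}}(a)\le 2^{v_2(p_i-1)}\le p_i-1$. The lcm of powers of $2$ is their maximum, so it is enough to show $p-1<2^{2^n-1}$ for every prime $p\mid F_n$. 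Since $v_2(p-1)$ is an integer with $2^{v_2(p-1)}\le p-1$, that bound would give $v_2(p-1)\le 2^n-2$ and finish the proof.

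For the size bound I use compositeness. If $p\mid F_n$, then $F_n=p\cdot c$ with $c>1$ an odd divisor of $F_n$. Every prime factor of $F_n$ is at least $5$: $F_n$ is odd, and $2^{2^n}\equiv1\pmod 3$ gives $F_n\equiv2\pmod3$. Hence $c\ge5$, and
\[
p\le \frac{F_n}{5}=\frac{2^{2^n}+1}{5}<2^{2^n-2}.
\]
So $p-1<2^{2^n-2}$, which is even stronger than needed; it actually gives $a^{(F_n-1)/8}\equiv1$. The same argument covers the case that $F_n$ is a prime power.

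The only delicate point is the size estimate. With only the trivial bound $c\ge3$ one gets $p\le F_n/3$, which is slightly larger than $2^{2^n-2}$ and would not suffice. So one must exclude $3$ as a factor, or invoke the classical fact that prime divisors of $F_n$ are $\equiv1\pmod{2^{n+2}}$. The congruence $F_n\equiv2\pmod3$ is the cheapest route.
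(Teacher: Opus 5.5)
Your proof is correct. It shares the paper's framework: the local orders are powers of $2$ dividing the $2$-part of $p_i-1$, and the global order is their maximum. The key estimate, however, is obtained differently.

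The paper argues by contradiction. Assuming $2^{2^n-1}\mid p_s-1$, it writes $p_s=h\cdot 2^{2^n-1}+1$ and uses only $p_s\le F_n$. If $h$ has an odd factor this gives $p_s>F_n$, so $h$ must be a power of $2$. The paper then rules out $p_s=2^m+1$ by two classical facts: such a prime is a Fermat number, and distinct Fermat numbers are coprime. Lucas's theorem is cited there but is not actually needed. Because only $p_s\le F_n$ is used, this extra number theory is required. Moreover, the subcase $h=2$, i.e.\ $p_s=F_n$, is really excluded by compositeness, not by coprimality.

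You use compositeness up front instead, getting $p\le F_n/c$ for a nontrivial cofactor $c$. This disposes of every case at once and needs no citations. It also covers the top case $\operatorname{ord}_{F_n}(a)=2^{2^n}$ automatically; the paper only supposes $\alpha^*=2^n-1$. Your bound also yields more: $a^{(F_n-1)/8}\equiv 1$. Feeding in Lucas's bound $c\ge 2^{n+2}+1$ would even give $a^{(F_n-1)/2^{n+3}}\equiv 1\pmod{F_n}$.

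One correction to your closing paragraph: the trivial bound $c\ge 3$ already suffices for the theorem as stated. Indeed $p\le F_n/3<2^{2^n-1}$, which is exactly the threshold you identified earlier. The observation $F_n\equiv 2\pmod 3$ is needed only for the $(F_n-1)/8$ strengthening. You compared against $2^{2^n-2}$ when $2^{2^n-1}$ was the relevant bound.
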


\begin{proof}
Let 
\[
F_n=\prod_{i=1}^{k} p_i^{e_i}
\]
be the prime factorization of $F_n$. Since $F_n$ is pseudoprime to the base $a$,
\[
a^{F_n-1}\equiv 1 \pmod{p_i^{e_i}} \quad \text{ for all } i.
\]
Hence the order of $a$ modulo $p_i^{e_i}$ divides both 
\[
\varphi(p_i^{e_i})=p_i^{e_i-1}(p_i-1)
\]
and 
\[
F_n-1=2^{2^n}.
\]
Therefore
\[
\operatorname{ord}_{p_i^{e_i}}(a)\mid 2^{2^n}.
\]
Since any divisor of a power of two is itself a power of two, we have
\[
\operatorname{ord}_{p_i^{e_i}}(a)=2^{\alpha_i}, \quad \alpha_i\ge 0.
\]
The order modulo $F_n$ is the least common multiple of the orders modulo its prime power factors:
\[
\operatorname{ord}_{F_n}(a)=\operatorname{lcm}\bigl(\operatorname{ord}_{p_i^{e_i}}(a)\bigr)
=2^{\alpha^*}, \quad \alpha^*=\max\{\alpha_i\}.
\]

It remains to show that $\alpha^*\le 2^n-2$.  Suppose, for contradiction, that $\alpha^*=2^n-1$. Then there exists a prime divisor $p_s$ of $F_n$ such that
\[2^{2^n-1}\mid \varphi(p_s^{e_s}).\]
Since $p_s$ is odd, $p_s^{e_s-1}$ is odd and thus
 \[2^{2^n-1}\mid (p_s-1).\]
Hence
\[
p_s=h\cdot 2^{2^n-1}+1.
\]
By Lucas's theorem on prime divisors of Fermat numbers
(p. 376 in \cite{D}),
\[
p_s=k2^{n+2}+1.
\]
Moreover, $k$ cannot be equal to $1$ or to a power of $2$. Indeed, otherwise
\[p_s=2^m+1\]
for some integer $m$. Since $p_s$ is prime, a classical result implies that $m$ is a power of 2  (p.~18 in \cite{H}). Hence $p_s=F_r$. This is impossible because $p_s\mid F_n$ while distinct Fermat numbers are pairwise coprime (p.~17 in \cite{H}).

\noindent 
If $h=1$ or $h=2^t$, then $p_s$ would be of the form $2^m+1$, a contradiction.

\noindent
Therefore $h$ cannot be equal to $1$ or to a power of $2$. Hence $h$ must have an odd divisor $q>1$.
Since $q\ge 3$, we obtain
\[
p_s=h\cdot 2^{2^n-1}+1
\ge q\cdot 2^{2^n-1}+1
\ge 3\cdot 2^{2^n-1}+1
>2^{2^n}+1=F_n,
\]
which contradicts $p_s\mid F_n$.

Thus $\alpha^*\le 2^n-2$, and therefore
\[
\operatorname{ord}_{F_n}(a)\mid 2^{2^n-2}=\frac{F_n-1}{4}.
\]
Consequently,
\[
a^{(F_n-1)/4}\equiv 1 \pmod{F_n}.
\]

\end{proof}

The following consequence of Theorem~1 is weaker but closer in form to Pépin's criterion.
\begin{corollary}
If $F_n$ is pseudoprime to the base $3$, then
\[
3^{(F_n-1)/2} \equiv 1 \pmod{F_n}.
\]
\end{corollary}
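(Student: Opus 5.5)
The corollary follows directly from Theorem~1 by squaring. I will first check the hypothesis of Theorem~1, namely $\gcd(3,F_n)=1$. Since $2^{2^n}\equiv (-1)^{2^n}=1 \pmod 3$, we get $F_n\equiv 2\pmod 3$, so $3\nmid F_n$. Alternatively, a base $a$ with $a^{F_n-1}\equiv 1\pmod{F_n}$ is automatically invertible modulo $F_n$, hence coprime to it.

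With $a=3$, Theorem~1 then gives
\[
3^{(F_n-1)/4}\equiv 1 \pmod{F_n}.
\]
The exponent $(F_n-1)/4=2^{2^n-2}$ is an integer, since $n\ge5$. Squaring both sides yields
\[
3^{(F_n-1)/2}=\bigl(3^{(F_n-1)/4}\bigr)^2\equiv 1 \pmod{F_n},
\]
which is the claim.

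There is no real obstacle here. The only point requiring a moment's care is confirming that the coprimality hypothesis of Theorem~1 holds for base $3$, and the congruence $F_n\equiv 2\pmod 3$ settles it. All the substantive work (bounding $\operatorname{ord}_{F_n}(3)$ by $2^{2^n-2}$) was already done in Theorem~1. The corollary simply records the weaker statement, with exponent $(F_n-1)/2$, to match the form of Pépin's congruence.
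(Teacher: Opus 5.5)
Your proof is correct and matches the paper's argument: apply Theorem~1 with $a=3$, then square $3^{(F_n-1)/4}\equiv 1\pmod{F_n}$. Your explicit check that $\gcd(3,F_n)=1$ (via $F_n\equiv 2\pmod 3$, or via the invertibility of any base satisfying the Fermat congruence) is correct and fills in a detail the paper leaves implicit.
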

\begin{proof}
This follows immediately from Theorem 1 after squaring.
\end{proof}
The converse also holds.
\begin{corollary}
If  
\[
3^{(F_n-1)/2} \equiv 1 \pmod{F_n},
\]
then $F_n$ is pseudoprime to the base $3$.
\end{corollary}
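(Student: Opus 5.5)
The plan is to check directly the two conditions in the definition of a pseudoprime to the base $3$: that $3^{F_n-1}\equiv 1 \pmod{F_n}$, and that $F_n$ is composite. Coprimality of $3$ and $F_n$, which the definition also uses implicitly, comes out of the same hypothesis.

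First, I square the hypothesis $3^{(F_n-1)/2}\equiv 1 \pmod{F_n}$. This gives
\[
3^{F_n-1}\equiv 1 \pmod{F_n},
\]
which is the Fermat congruence to the base $3$. The hypothesis also shows that $3$ is invertible modulo $F_n$, with inverse $3^{(F_n-1)/2-1}$, so $\gcd(3,F_n)=1$. One can also see this directly: $F_n = 2^{2^n}+1\equiv 1+1 = 2 \pmod 3$.

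Second, I show that $F_n$ is composite, arguing by contradiction with Pépin's criterion recalled in the Introduction. Suppose $F_n$ were prime. Since $n\ge 5\ge 2$, Pépin's theorem gives $3^{(F_n-1)/2}\equiv -1 \pmod{F_n}$. Combined with the hypothesis, this yields $1\equiv -1 \pmod{F_n}$, so $F_n\mid 2$. That is impossible because $F_n>2$. Hence $F_n$ is composite, and together with the first step, $F_n$ is a pseudoprime to the base $3$.

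None of these steps is a real obstacle. The only point needing care is that the argument uses only the ``prime $\Rightarrow$ congruence with $-1$'' direction of Pépin's criterion, which is valid for all $n\ge 2$ and so covers the range $n\ge 5$ considered here.
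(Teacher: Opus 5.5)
Your proof is correct and follows essentially the paper's argument: squaring the hypothesis gives $3^{F_n-1}\equiv 1 \pmod{F_n}$, and the ``prime $\Rightarrow -1$'' direction of Pépin's test rules out primality. Your extra remarks on $\gcd(3,F_n)=1$ and on why $1\equiv -1 \pmod{F_n}$ is impossible just make explicit what the paper leaves implicit.
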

\begin{proof}
Since $(3^{(F_n-1)/2})^2=3^{F_n-1} \equiv 1 \pmod{F_n}$, $F_n$ is prime or pseudoprime to the base $3$. But it cannot be prime because $3^{(F_n-1)/2}\equiv 1 \pmod{F_n}$ contradicts Pépin's test.
\end{proof}
Combining Corollaries 1 and 2, we obtain the following criterion.
\begin{theorem}[Base-$3$ pseudoprimality criterion]
\[
F_n \text{ is pseudoprime to base } 3 
\;\;\Longleftrightarrow\;\;
3^{(F_n-1)/2} \equiv 1 \pmod{F_n}.
\]
\end{theorem}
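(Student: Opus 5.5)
The criterion is the conjunction of Corollary~1 and Corollary~2, so I would prove the two directions separately, each reducing to an earlier result. Throughout I assume $n\ge5$, as fixed at the start of this section. I take ``pseudoprime to base $3$'' to mean that $F_n$ is composite and $3^{F_n-1}\equiv1\pmod{F_n}$. Note that $\gcd(3,F_n)=1$, since $F_n=2^{2^n}+1\equiv(-1)^{2^n}+1\equiv2\pmod 3$.

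For the forward direction ($\Rightarrow$), suppose $F_n$ is pseudoprime to base $3$. I would apply Theorem~1 with $a=3$; the coprimality hypothesis holds by the remark above. This gives $3^{(F_n-1)/4}\equiv1\pmod{F_n}$. Squaring yields $3^{(F_n-1)/2}\equiv1\pmod{F_n}$, which is exactly Corollary~1.

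For the converse ($\Leftarrow$), suppose $3^{(F_n-1)/2}\equiv1\pmod{F_n}$. Squaring gives $3^{F_n-1}\equiv1\pmod{F_n}$, so the Fermat congruence holds. It remains to show $F_n$ is composite. If $F_n$ were prime, Pépin's test (valid for $n\ge2$) would give $3^{(F_n-1)/2}\equiv-1\pmod{F_n}$. Then $1\equiv-1$, so $F_n\mid2$, which is impossible since $F_n>2$. Hence $F_n$ is composite and therefore pseudoprime to base $3$; this is Corollary~2.

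Essentially all the difficulty sits inside Theorem~1, namely in bounding the $2$-adic order of $3$ modulo the prime power factors of $F_n$. Once that is taken as given, the only care needed here is in two small checks: that $3\nmid F_n$, so the hypothesis of Theorem~1 applies, and that $1\not\equiv-1\pmod{F_n}$, so the appeal to Pépin's test genuinely excludes primality.
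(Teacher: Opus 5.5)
Your proof is correct and matches the paper's: Theorem~2 is proved there by combining Corollary~1 (Theorem~1 with $a=3$, then squaring) and Corollary~2 (squaring, then P\'epin's test to exclude primality). Your explicit checks that $\gcd(3,F_n)=1$ (via $F_n\equiv 2 \pmod 3$) and that $1\not\equiv -1 \pmod{F_n}$ make precise two points the paper leaves implicit.
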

The following remark records the possible values $3^{(F_n-1)/2}  \pmod{F_n}$.
\begin{remark}
The value of
\[
3^{(F_n-1)/2}\pmod{F_n}
\]
determines whether $F_n$ is prime ($-1$), pseudoprime to the base $3$ ($1$), or composite and not pseudoprime to the base $3$ ($\neq \pm1$). 
\end{remark}
\subsection*{A further consequence}
The following corollary establishes an unexpected congruence satisfied by every Fermat number with $n\geq5$.

\begin{corollary}
\[
3^{(F_n-1)/4}\not\equiv -1 \pmod{F_n}.
\]
\end{corollary}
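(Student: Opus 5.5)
The plan is a short case analysis on the status of $F_n$, $n\ge5$: either $F_n$ is prime, or it is composite and pseudoprime to the base $3$, or it is composite and not pseudoprime to the base $3$. In each case we assume, for contradiction, that
\[
3^{(F_n-1)/4}\equiv -1 \pmod{F_n},
\]
and derive a contradiction. The first step is common to all cases: squaring the assumed congruence gives
\[
3^{(F_n-1)/2}\equiv 1 \pmod{F_n}.
\]

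If $F_n$ is prime, this contradicts Pépin's test, which for $n\ge2$ gives $3^{(F_n-1)/2}\equiv -1$. Since $F_n>2$, we have $1\not\equiv -1 \pmod{F_n}$.

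If $F_n$ is not prime, Corollary~2 (equivalently Theorem~2) shows that $F_n$ is pseudoprime to the base $3$; note that $\gcd(3,F_n)=1$ because $F_n\equiv 2\pmod 3$. Theorem~1 with $a=3$ then yields
\[
3^{(F_n-1)/4}\equiv 1 \pmod{F_n}.
\]
Together with the assumed congruence this gives $1\equiv -1\pmod{F_n}$, i.e.\ $F_n\mid 2$, which is absurd. The third case, composite but not pseudoprime, is therefore absorbed: the squared congruence already forces pseudoprimality, so it cannot occur under the assumption.

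The only point needing care is the prime case. There Theorem~1 does not apply, so we rely on Pépin's criterion rather than on the order argument. An alternative route is to note that $3$ is a quadratic non-residue modulo a prime $F_n$, so Euler's criterion gives $3^{(F_n-1)/2}\equiv-1$. Either way, all inputs are already available in the paper, and I expect no real obstacle beyond correctly invoking Theorem~1 in the composite case.
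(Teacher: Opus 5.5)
Your proof is correct and follows the paper's argument. You square the assumed congruence to get $3^{(F_n-1)/2}\equiv 1 \pmod{F_n}$, use Corollary~2 (Theorem~2) to get pseudoprimality to the base $3$, and then apply Theorem~1 to reach $1\equiv -1\pmod{F_n}$. Your separate prime case is harmless but redundant, since Corollary~2 already uses P\'epin's test to rule out primality.
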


\begin{proof}
Suppose, for contradiction, that
\[
3^{(F_n-1)/4}\equiv -1 \pmod{F_n}.
\]
Then
\[
3^{(F_n-1)/2}\equiv 1 \pmod{F_n}.
\]
By Theorem~2, $F_n$ is pseudoprime to the base $3$. Hence, by Theorem~1,
\[
3^{(F_n-1)/4}\equiv 1 \pmod{F_n},
\]
contradicting the assumption.
\end{proof}


\begin{thebibliography}{9}
\bibitem{D} L. E. Dickson, \emph{History of the Theory of Numbers}, Vol. 1, Dover, 2005.
\bibitem{H} G. H. Hardy and E. M. Wright, \emph{An Introduction to the Theory of Numbers}, 6th ed., Oxford, 2008.
\bibitem{Ri} P. Ribenboim, \emph{The New Book of Prime Number Records}, Springer, 2012.
\end{thebibliography}
\end{document}